\documentclass[12pt,reqno]{amsart}
\usepackage{amsmath,amsthm,amscd,amsfonts,amssymb,color}
\usepackage[mathscr]{eucal}

\usepackage[bookmarksnumbered,colorlinks,plainpages]{hyperref}
\setcounter{MaxMatrixCols}{10}
\voffset = -18pt \hoffset = -27pt \textwidth = 5.6in
\textheight 22.5truecm \textwidth 14.5truecm
\setlength{\oddsidemargin}{0.35in}\setlength{\evensidemargin}{0.35in}
\setlength{\topmargin}{-.5cm}
\newtheorem{theorem}{Theorem}[section]

\newtheorem{corollary}[theorem]{Corollary}

\theoremstyle{definition}
\newtheorem{definition}[theorem]{Definition}
\newtheorem{example}[theorem]{Example}
\newtheorem{Open Prob}[theorem]{Open Problem}
\theoremstyle{remark}
\newtheorem{remark}[theorem]{Remark}
\numberwithin{equation}{section}
\def\DJ{\leavevmode\setbox0=\hbox{D}\kern0pt\rlap{\kern.04em\raise.188\ht0\hbox{-}}D}

\begin{document}

\title[Farthest point problem]{A note on remotal and uniquely remotal sets in normed linear spaces}

\author[U.\ Mukherjee, S.\ Som]
{Uddalak Mukherjee, Sumit Som*}

\thanks{*Corresponding author: (somkakdwip@gmail.com)}

\address{           Uddalak Mukherjee,
                    Department of Computer Science, School of Mathematical Sciences, Ramakrishna Mission Vivekananda Educational and Research Institute, Howrah-711202, India and Advanced Computing and Microelectronics Unit, Indian Statistical Institute, Kolkata.}
                    \email{uddalakmukherjee49@gmail.com}


\address{          Sumit Som,
                   Department of Mathematics, Dinabandhu Mahavidyalaya, Bongaon-743235, North 24 Parganas, West Bengal  India.}
                    \email{somkakdwip@gmail.com}

\subjclass {$40A35,$ $47H10$}

\keywords{Farthest point, remotal set, uniquely remotal set, $\alpha\beta$-statistical convergence.}

\begin{abstract}
Remotal and uniquely remotal sets play an important role in the area of farthest point problem as well as nearest point problem in a Banach space $X.$ In this study, we find some sufficient conditions for remotality and uniquely remotality of a bounded subset of a Banach space $X$ through $\alpha\beta$-statistical convergence. 
\end{abstract}

\maketitle


\section{Introduction}
Let $X$ be a real normed linear space and $G$ be a non-empty, bounded subset of $X.$ For any $x \in X$, the farthest distance from $x$ to $G$ is denoted by $\delta(x,G)$ and is defined by
$$
\delta(x, G)=\sup \{\|x-e\|: e \in G\} \text {. }
$$
If the distance is attained, then the collection of all such points of $G$ corresponding to $x \in X$ is denoted by $F(x,G)$ and defined by
$$
F(x,G)=\{e \in G:\|x-e\|=\delta(x, G)\} \text {. }
$$
For a non-empty and bounded subset $G$ of $X,$ let us define
$$
r(G)=\{x \in X: F(x, G) \neq \phi\} .
$$
$G$ is said to be remotal if $r(G)=X$ and uniquely remotal if $r(G)=X$ and $F(x, G)$ is a singleton for each $x \in X$. The farthest point problem (FPP) states that whether every uniquely remotal set in a Banach space must be a singleton. In the year 1940, the farthest point problem was proposed by B. Jessen \cite{jes}. He proved that if $M$ is a closed uniquely remotal subset in a finite dimensional normed linear space $X,$ then $M$ is a singleton. In 1953, Motzkin, Starus and Valentine \cite{MO} analyzed uniquely remotal sets in the context of Minkowski plane and also studied remotal sets in Euclidean plane $E_2.$ In 1961, FPP for Banach spaces was introduced by Klee \cite{klee} and he proved that every compact uniquely remotal subset of a Banach space $X$ is a singleton. In 2001 Baronti and Papini \cite{ba} indicated characterizations of inner product spaces and infinite dimensional Banach spaces in terms of remotal sets and uniquely remotal sets. In \cite{CBM2}, Sababheh and Khalil proved that in a reflexive Banach space $X$ every closed bounded convex subset of $X$ is remotal if and only if $X$ is finite dimensional and showed by an example that every infinite dimensional reflexive Banach space X has a closed bounded and convex non-remotal subset. In the same paper \cite{CBM2}, they asked the question that in a Banach space $X$ (not necessarily reflexive) if every closed bounded convex subset is remotal then does it imply $X$ is finite dimensional or not. In the year 2010, Mart\'{i}n and Rao \cite{mrao} showed that in every infinite dimensional Banach space $X,$ there is a closed bounded convex subset which is not remotal. Narang et al. \cite{narang} proved the singletonness of remotal and uniquely remotal sets in the context of convex metric spaces. In 2017, Sababheh et al. \cite{CBM} introduced the notion of partial continuity of a function and and provided sufficient conditions for a closed bounded uniquely remotal subset (admitting a center) in a normed linear space to be a singleton. A Chebyshev center of a subset $E$ of a normed linear space $X$ is an element $c \in X$ such that $\delta(c, E)=\inf _{x \in X} \delta(x, E)$. In \cite{CBM1}, Sababheh et al. introduced the notion of $x$-compactness with respect to a closed bounded subset $E$ of a Banach space $X$ and $x\in X.$ On the other hand, the idea of convergence of a real sequence has been extended to statistical convergence by Fast \cite{fa} and Steinhaus \cite{st} and later on re-introduced by Schoenberg \cite{sc} independently and is based on the notion of asymptotic density or natural density of the subset of natural numbers. For a non-empty subset $K\subseteq \mathbb{N},$ the upper density is defined as $\overline{D(K)}=\displaystyle{\limsup_{n \rightarrow \infty}}\frac{1}{n}|\{k \leq n : k \in K\}|$ and the lower density is defined as $\underline{D(K)}=\displaystyle{\liminf_{n \rightarrow \infty}}\frac{1}{n}|\{k \leq n : k \in K\}|.$ We say that the natural density of $K$ exists if $\overline{D(K)}=\underline{D(K)}$ and the limit is defined as $D(K)=\displaystyle{\lim_{n \rightarrow \infty}}\frac{1}{n}|\{k \leq n : k \in K\}|.$ A sequence $\{x_n\}_{n \in \mathbb{N}}$ of real numbers is said to be statistically convergent to $x\in \mathbb{R}$ if for every $\varepsilon>0,$ $D(\{k \in \mathbb{N}: |x_k-x|\geq \varepsilon\})=0.$ If a real sequence $(x_n)$ is convergent to a real number $l$ then $(x_n)$ is statistically convergent to $l$ but the converse is not true. Also, a statistically convergent real sequence may not be bounded. For more details on statistical convergence and related results, readers can see the references \cite{c1,c2}. In the year 2021, Basu et al. \cite{ssm}, introduced the notion of partial statistical continuity of a function and proved some results about farthest point problem. In this paper, we mainly use the notion of $\alpha\beta$-statistical convergence, introduced by Aktuglu in 2014 \cite{PMK} which is more general than the notion of statistical convergence. We consider a class of functions from $[0,\infty)$ into $[0,\infty)$ to provide sufficient conditions for remotality of a non-empty bounded subsets of a Banach space $X.$ In the last part of this paper we introduce the notion of $x$-$\alpha\beta$-compactness with respect to a closed bounded subset $E$ of a Banach space $X$ and $x\in X.$ We show by an example that this notion of $x$-$\alpha\beta$-compactness is more weaker and general than $x$-compactness introduced by Sababheh et al. in \cite{CBM1}. We also use this notion to provide sufficient conditions for remotality and uniquely remotality of closed bounded subsets of a Banach space $X.$

\section{Main Results}

Firstly we recall the notion of $\alpha\beta$-statistical convergence from \cite{PMK} as follows.

\newtheorem{defn}{Definition}

\begin{definition}\label{y1}
    Let $\alpha=\left\{\alpha_n\right\}_{n \in \mathbb{N}}$ and $\beta=\left\{\beta_n\right\}_{n \in \mathbb{N}}$  be two sequences of positive numbers satisfying the following conditions: P1 : $\alpha$ and $\beta$ are both non-decreasing, P2 : $\beta_n \ge \alpha_n$ for all $n \in \mathbb{N},$ P3 : $(\beta_n - \alpha_n) \rightarrow \infty$ as $n \rightarrow \infty$. Then a sequence $\left\{x_n\right\}_{n \in \mathbb{N}}$ is said to be $\alpha\beta$-statistically convergent to a point $x \in \mathbb{R}$ if for each $\varepsilon > 0$,  		 

\vspace{0.3cm}

\centering
   $\lim _{n \rightarrow \infty} \frac{1}{(\beta_n-\alpha_{n}+1)} \mid\left\{k \in\left[\alpha_n, \beta_n\right] :| x_k-x| \geq \varepsilon\right\} \mid = 0.$ 
\end{definition}

This pair of sequences satisfying the three conditions is denoted by $(\alpha,\beta).$ In the above definition, if $\alpha_n=1$ and $\beta_n=n$ for all $n \in \mathbb{N}$ then $\alpha\beta$-statistical convergence coincides with the notion of statistical convergence. So, $\alpha\beta$-statistical convergence is more general than the notion of statistical convergence. Throughout the paper when we mention $\alpha=\left\{\alpha_n\right\}_{n \in \mathbb{N}}$ and $\beta=\left\{\beta_n\right\}_{n \in \mathbb{N}}$  be two sequences of positive numbers, we mean they satisfy the three conditions mentioned in definition \ref{y1}. Now, we introduce the notion of $\alpha\beta$-partial statistical continuity of a function at any point in a real normed linear space $X.$ For any subset $K\subseteq \mathbb{N},$ the $\alpha\beta$-density of $K$ is defined as
$$D_{\alpha\beta}(K)=\displaystyle{\lim_{n\rightarrow \infty}}\frac{1}{(\beta_n-\alpha_n+1)}|\{k\in [\alpha_n,\beta_n]: k\in K\}.$$

\begin{definition}\label{x1}
Let $X$ be a real normed linear space and $M \subseteq X$. A function $G: M \rightarrow X$ is said to be partially $\alpha\beta$-statistically continuous at $x \in M$ if there exists a non-eventually constant sequence $\left\{x_n\right\}_{n \in \mathbb{N}} \subset M$ such that $\left\{x_n\right\}_{n \in \mathbb{N}}$ is $\alpha \beta$-statistically convergent to $x$ implies $\left\{G\left(x_n\right)\right\}_{n \in \mathbb{N}}$ is $\alpha \beta$- statistically convergent to $G(x)$ i.e. for each $\varepsilon>0$,
$$
\lim _{\mathrm{n} \rightarrow \infty} \frac{1}{(\beta_n-\alpha_n+1)}\left|\left\{k \in\left[\alpha_n, \beta_n\right]:\left\|x_k-x\right\| \geq \varepsilon\right\}\right|=0
$$
implies, $\quad \lim _{\mathrm{n} \rightarrow \infty} \frac{1}{(\beta_n-\alpha_n+1)}\left|\left\{k \in\left[\alpha_n, \beta_n\right]:\left\|G\left(x_k\right)-G(x)\right\| \geq \varepsilon\right\}\right|=0.$
\end{definition}

\begin{remark}
Note that, Definition \ref{x1}, improves the Definition 2.8 in \cite{ssm} as $\alpha\beta$-statistical convergence is more general than the statistical convergence. Also, Definition \ref{x1}, improves the Definition 1.1 in \cite{CBM} as partial statistical continuity implies partial $\alpha\beta$-statistical continuity but the converse is not true. The following example illustrate this fact.
\end{remark}

\begin{example}
Let $f:[-1,1] \rightarrow \mathbb{R}$ be defined by 
$$
f(x)=\left\{\begin{array}{cc}
-1, & x<0; \\
0, & x=0; \\
1, & x>0.
\end{array}\right.
$$
It follows easily that the function in not partially continuous at $x=0$. But, $f$ is partially $\alpha\beta$-statistically continuous at $x=0$.
Let,
$$
x_n=\left\{\begin{array}{cc}
0, & n \neq 2^m~\mbox{for any}~m\in \mathbb{N}; \\
-1+c^n, &  n = 2^m~\mbox{for some}~m\in \mathbb{N}.
\end{array}\right.
$$
where $0<c<1$ and $\left[\alpha_n, \beta_n\right]=\left[1, n^2\right].$
Note that, the sequence $\left\{x_n\right\}_{n \in \mathbb{N}}$ is not convergent to 0 in the usual sense. Let $\varepsilon>0$. Now,
$$
\begin{aligned}
& \left\{k \in\left[\alpha_n, \beta_n\right]:\left|x_k-0\right| \geqslant \varepsilon\right\} \\
& \subseteq \left\{k \in\left[\alpha_n, \beta_n\right]: k=2^m \text { for some \hspace{0.1cm}} m \in \mathbb{N}\right\}.
\end{aligned}
$$
So,
$
\begin{aligned}
& \frac{1}{(n^2-1+1)}|\left\{k \in\left[1, n^2\right]:\left|x_k-0\right| \geqslant \varepsilon\right\}| 
& =\frac{\left\lfloor\log _2 n^2\right\rfloor}{n^2} \leqslant \frac{2 \log _2 n}{n^2} \rightarrow 0  
\hspace{0.1cm}
(as \hspace{0.1cm}n \rightarrow \infty).
\end{aligned}
$

Hence, $\lim _{n \rightarrow \infty} \frac{1}{n^2}\left|\left\{k \in\left[1, n^2\right]:\left|x_k-0\right| \geqslant \varepsilon\right\}\right|=0.$ So, the sequence is $\alpha \beta$-statistically convergent to $0$. Now,
$$
f\left(x_n\right)=\left\{\begin{array}{cc}
0, & \text { if \hspace{0.1cm} } n \neq 2^m \text { \hspace{0.1cm} for\hspace{0.1cm} all \hspace{0.1cm}} m \in \mathbb{N} \\
-1, & \text { if \hspace{0.1cm} } n=2^m \text {\hspace{0.1cm} for\hspace{0.1cm} some  \hspace{0.1cm}} m \in \mathbb{N}.
\end{array}\right.
$$
\end{example}
Proceeding similarly as $\left\{x_n\right\}_{n\in \mathbb{N}}$, it can be shown that the sequence $\left\{f\left(x_n\right)\right\}_{n\in \mathbb{N}}$ is $\alpha \beta$-statistically convergent to $f(0)=0.$ So, $f$ is partially $\alpha \beta$-statistically continuous at the point $x=0$.






\begin{theorem}
Let $E \subset X$ be uniquely remotal and has a Chebyshev center $c \in X$. If the farthest point map $F: X \rightarrow E$ restricted to $[c, F(c)]$ is partially $\alpha\beta$-statistically continuous at $c$, then $E$ is a singleton.
\end{theorem}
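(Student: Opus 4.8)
The plan is to argue by contradiction. Suppose $E$ is not a singleton. Writing $p=F(c)$ and $R=\delta(c,E)=\|c-p\|$, non-singletonness forces $R>0$ and $p\neq c$. Since $E$ is uniquely remotal the map $F$ is single-valued, and the hypothesis of partial $\alpha\beta$-statistical continuity at $c$ supplies a non-eventually constant sequence $\{x_n\}_{n\in\mathbb{N}}\subseteq[c,p]$ that is $\alpha\beta$-statistically convergent to $c$ and for which $\{F(x_n)\}_{n\in\mathbb{N}}$ is $\alpha\beta$-statistically convergent to $F(c)=p$. Every point of the segment can be written as $x_n=(1-t_n)c+t_np$ with $t_n\in[0,1]$, so that $\|x_n-c\|=t_nR$; thus the statement $x_n\to c$ ($\alpha\beta$-statistically) is exactly $t_n\to 0$ ($\alpha\beta$-statistically), while the non-eventually constant hypothesis guarantees $t_n>0$ for infinitely many $n$.

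The contradiction will come from playing the Chebyshev-center property against the assumed continuity. On one side, because $c$ minimises $\delta(\cdot,E)$ we have the pointwise lower bound $\delta(x_n,E)\ge\delta(c,E)=R$ for every $n$, and unique remotality gives $\delta(x_n,E)=\|x_n-F(x_n)\|$. On the other side, collinearity of $c$, $x_n$, $p$ yields $\|x_n-p\|=(1-t_n)R$, and the triangle inequality gives $\big|\,\|x_n-F(x_n)\|-\|x_n-p\|\,\big|\le\|F(x_n)-p\|$. Since $\|F(x_n)-p\|\to 0$ ($\alpha\beta$-statistically), the quantity $\delta(x_n,E)=\|x_n-F(x_n)\|$ is $\alpha\beta$-statistically asymptotic to $\|x_n-p\|=(1-t_n)R$. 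Combining the two sides, on the relevant index set one is driven to $R\le\delta(x_n,E)\approx(1-t_n)R$, which is impossible as soon as $t_n>0$. Pushing this through should force $t_n=0$ ($\alpha\beta$-statistically), i.e. $F(c)=c$ and hence $R=\|c-F(c)\|=0$, so $E=\{c\}$.

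The main obstacle is the passage from $\alpha\beta$-statistical convergence to a genuine inequality at a usable index. For fixed $\varepsilon>0$ the set $A_\varepsilon=\{n:\|F(x_n)-p\|\ge\varepsilon\}$ has $\alpha\beta$-density zero, so on its complement $\|x_n-F(x_n)\|<(1-t_n)R+\varepsilon$; together with $\|x_n-F(x_n)\|\ge R$ this only gives $t_nR<\varepsilon$ on a density-one set, which merely re-expresses $t_n\to 0$. To close the argument I must exhibit an index at which $t_n>0$ is not swamped by $\varepsilon$, i.e. couple the density-one information about $\{F(x_n)\}$ with the non-eventually constant behaviour of $\{x_n\}$. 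The real care is in handling the two $\alpha\beta$-densities at once — the one governing $x_n\to c$ and the one governing $F(x_n)\to p$ — over the blocks $[\alpha_n,\beta_n]$, since a priori the indices with $t_n$ bounded away from $0$ could themselves form an $\alpha\beta$-density-zero set and thereby escape the selection. I expect this density bookkeeping, rather than the elementary geometric inequalities, to be the crux: once an admissible index is isolated, the chain $R\le\delta(x_n,E)=\|x_n-F(x_n)\|<R$ contradicts the Chebyshev-center property and forces $F(c)=c$, whence $E=\{c\}$.
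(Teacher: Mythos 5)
There is a genuine gap, and it sits exactly where you yourself locate it. First, the inequality your triangle-inequality argument produces, $t_nR\le\|F(x_n)-p\|$, is too weak to yield a contradiction under \emph{any} mode of convergence, statistical or not: both sides tend to $0$ together, so no index can ever realize the chain $R\le\delta(x_n,E)<R$. The proof this paper defers to (via \cite{ssm}, which follows \cite{CBM}) rests on a sharper, purely geometric fact that your plan never uses: a Klee-type ray lemma stating that if $e$ is a farthest point of $E$ from $x$, then $e$ is also a farthest point from every $y=x+s(x-e)$, $s\ge0$. Applying this at $x_n=(1-t_n)c+t_np$ with $s=t_n/(1-t_n)$ gives the point $y_n=c+\frac{t_n}{1-t_n}\bigl(p-F(x_n)\bigr)$, for which $\delta(y_n,E)=\frac{1}{1-t_n}\delta(x_n,E)\ge\frac{R}{1-t_n}$ by the Chebyshev-center property, while $\delta(y_n,E)\le\|y_n-c\|+R=\frac{t_n}{1-t_n}\|p-F(x_n)\|+R$; cancelling yields the \emph{uniform} bound $\|F(x_n)-p\|\ge R$ at every index with $0<t_n<1$. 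This bound does not degenerate as $t_n\to0$, and it is what closes the norm-convergence argument of \cite{CBM}: non-eventual constancy gives infinitely many such indices, contradicting $F(x_n)\to p$ in norm. Your estimate can never be upgraded to this by choosing $\varepsilon$ cleverly, because the error you can afford is exactly of the size $t_nR$ you need to beat.

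Second, the ``density bookkeeping'' you defer is not bookkeeping but an obstruction, and it cannot be resolved along the lines you propose. Non-eventual constancy guarantees only \emph{infinitely many} indices with $t_n>0$, and an infinite index set can have $\alpha\beta$-density zero. Concretely, let $S$ be infinite with $D_{\alpha\beta}(S)=0$ and put $x_n=c$ for $n\notin S$, $x_n=(1-\tfrac1n)c+\tfrac1n p$ for $n\in S$: this sequence lies in $[c,p]$, is non-eventually constant, is $\alpha\beta$-statistically convergent to $c$, and since $F(x_n)=F(c)=p$ off $S$, the sequence $\{F(x_n)\}$ is $\alpha\beta$-statistically convergent to $p$ \emph{regardless} of what $F$ does on $S$. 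So all hypotheses you are manipulating can hold while every index at which a contradiction could be located lies inside a density-zero set, out of reach of any argument that only intersects density-one sets. This is precisely the difference between the statistical statement and the classical theorem of \cite{CBM}, where ``infinitely many bad indices'' does contradict convergence; any honest proof here must use more than the two statistical convergences of the witnessing sequence. Finally, your closing step is a non sequitur: ``forcing $t_n=0$ $\alpha\beta$-statistically'' is nothing but the already-assumed statistical convergence $x_n\to c$, and it implies nothing about $F(c)=c$ or about $R=0$.
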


\begin{proof}
Proof is similar to \cite{ssm}, so omitted.    
\end{proof}

Now, we consider the following class of functions which will help us to get sufficient conditions for a bounded subset to be remotal in a Banach space $X.$
Let $M$ denotes the class of functions $\varphi:[0, \infty) \rightarrow [0, \infty)$ such that $\varphi$ is strictly increasing, continuous and $\varphi(0)=0.$ This class $M$ is non-empty since, $\varphi(x)=x^p, 1 \leq p<\infty, x\in [0,\infty)$ belongs to $M.$ We introduce the next definition which will be needed in our next result.

\begin{definition}
Let $\{x_n\}_{n\in \mathbb{N}}$ be a sequence of real numbers. We say that $\{x_n\}_{n\in \mathbb{N}}$ is $\alpha\beta$-statistically divergent to $\infty$ if for every $M>0,M \in \mathbb{R},$ the $\alpha\beta$-density of the set $\{k\in \mathbb{N}: x_k< M\}$ is zero.     
\end{definition}
Note that, if a sequence $\{x_n\}_{n\in \mathbb{N}}$ of real numbers is divergent to $\infty$ then the sequence $\{x_n\}_{n\in \mathbb{N}}$ is $\alpha\beta$-statistically divergent to $\infty$ for any pair $(\alpha,\beta).$ But the converse is not true.

\begin{example}
Define the sequence $\{x_n\}_{n\in \mathbb{N}}$ by
\[
x_n=\begin{cases}
0, & n=2^k\text{ for some }k\ge1,\\[4pt]
n, & \text{otherwise.}
\end{cases}
\]
Choose window sequences $(\alpha_n)$ and $(\beta_n)$ with $\beta_n\ge\alpha_n$ and lengths $\ell_n:=(\beta_n-\alpha_n+1)\to\infty$, as $n \to \infty$ and suppose the windows satisfy
\[
\frac{\log \beta_n}{\ell_n}\xrightarrow[n\to\infty]{}0.
\tag{*}
\]
Then for every fixed $M>0$ the set $\{n:\;x_n\le M\}$ has $(\alpha,\beta)$-upper density $0$, so $(x_n)$ is $(\alpha,\beta)$-statistically divergent to $+\infty$.

Fix $M>0$ and let
\[
T_M:=\{n:\;x_n\le M\}.
\]
Thus for each window $[\alpha_n,\beta_n],$ we have the crude bound
\[
|T_M\cap[\alpha_n,\beta_n]|
\le |\{m\in[\alpha_n,\beta_n]:\ m\le M\}|+ |\{k:\;2^k\in[\alpha_n,\beta_n]\}|.
\]
The first term on the right is bounded by $M$ (independently of $n$). The second term is at most the number of powers of two $\le\beta_n$, which is $\lfloor\log_2\beta_n\rfloor+1$. Hence
\[
|T_M\cap[\alpha_n,\beta_n]|\le M + \lfloor\log_2\beta_n\rfloor+1.
\]
Dividing by $\ell_n$ and taking $\limsup_{n\to\infty}$ yields
\[
D^*_{\alpha,\beta}(T_M)
\le \limsup_{n\to\infty}\frac{M+1+\lfloor\log_2\beta_n\rfloor}{\ell_n}
= \limsup_{n\to\infty}\frac{\log_2\beta_n}{\ell_n}.
\]
Under assumption \((*)\), the right-hand side is $0$. Therefore $D^*_{\alpha,\beta}(T_M)=0$. Since $M>0$ was arbitrary, $\{x_n\}_{n\in \mathbb{N}}$ is $(\alpha,\beta)$-statistically divergent to $+\infty$.
\end{example}

\begin{theorem}
Let $E$ be a non-empty bounded subset of a Banach space $X$. If there exists $\varphi \in M$ such that for all $x \in X$, there exists a sequence $\left\{x_n\right\}_{n \in \mathbb{N}} \subset X$ which converges to $x$ and $y \in E$ such that for all $z \in E$,
$$
\{\varphi(\left\|x_n-y\right\|)-\varphi(\left\|x_n-z\right\|)\}_{n \in \mathbb{N}}~\mbox{is}~\alpha\beta~\mbox{statistically divergent to}~\infty 
$$
then, $E$ is remotal in $X.$
\end{theorem}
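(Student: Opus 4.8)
The plan is to fix an arbitrary $x \in X$, invoke the hypothesis to obtain a sequence $\{x_n\}_{n\in\mathbb{N}} \subset X$ with $x_n \to x$ together with a point $y \in E$, and then to show that this $y$ is a farthest point of $E$ from $x$; since $x$ is arbitrary this yields $r(E) = X$, i.e. remotality of $E$. Because $y \in E$ we already have $\|x - y\| \le \delta(x, E)$, so it suffices to prove the reverse inequality in the localized form $\|x - y\| \ge \|x - z\|$ for every $z \in E$. Taking the supremum over $z \in E$ will then force $\|x - y\| = \delta(x, E)$, whence $y \in F(x, E)$ and $x \in r(E)$.

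So I would fix $z \in E$ and analyze the real sequence $a_n := \varphi(\|x_n - y\|) - \varphi(\|x_n - z\|)$. First I record its ordinary limit: since $x_n \to x$, continuity of the norm gives $\|x_n - y\| \to \|x - y\|$ and $\|x_n - z\| \to \|x - z\|$, and then continuity of $\varphi$ (using $\varphi \in M$) gives $a_n \to L$, where $L := \varphi(\|x - y\|) - \varphi(\|x - z\|)$. The target inequality $\|x-y\|\ge\|x-z\|$ is, via strict monotonicity of $\varphi$, exactly the statement $L \ge 0$, so everything reduces to pinning down the sign of this ordinary limit.

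The heart of the argument is to extract $L \ge 0$ from the $\alpha\beta$-statistical divergence of $\{a_n\}_{n\in\mathbb{N}}$ to $\infty$. I would argue by contradiction: if $L < 0$, choose $N$ with $a_n < \frac{L}{2} < 0$ for all $n \ge N$; then for the choice $M = 1 > 0$ the set $\{k : a_k < M\}$ contains every $k \ge N$, so its complement meets each window $[\alpha_n, \beta_n]$ in at most $N-1$ indices. Dividing $|\{k \in [\alpha_n,\beta_n] : a_k < M\}|$ by $(\beta_n - \alpha_n + 1)$ and invoking condition P3 (namely $\beta_n - \alpha_n \to \infty$) shows that this cofinite index set has $\alpha\beta$-density equal to $1$, contradicting the hypothesis that $D_{\alpha\beta}(\{k : a_k < M\}) = 0$ for all $M > 0$. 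Hence $L \ge 0$, giving $\|x-y\|\ge\|x-z\|$ as required.

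I expect the main obstacle to lie precisely in this reconciliation of the two convergence notions, since ordinary convergence to a finite limit and statistical divergence to $\infty$ pull in opposite directions, and the real content is to harvest the sign of $L$ cleanly. The delicate point is verifying that a genuinely cofinite index set carries full $\alpha\beta$-density, which is exactly where conditions P1--P3 on $(\alpha,\beta)$ are used: without $\beta_n - \alpha_n \to \infty$ the windows could be too short for the finite exceptional set to become asymptotically negligible. Once $\|x - y\| \ge \|x - z\|$ is secured for every $z \in E$, passing to the supremum over $z$ gives $\|x - y\| = \delta(x, E)$, so $F(x, E) \neq \emptyset$; as $x \in X$ was arbitrary, $r(E) = X$ and $E$ is remotal in $X$.
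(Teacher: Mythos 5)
Your proposal is correct and follows essentially the same route as the paper: both arguments rest on the observation that, since $x_n \to x$ and $\varphi$ and the norm are continuous, $a_n := \varphi(\|x_n-y\|)-\varphi(\|x_n-z\|)$ converges in the ordinary sense to $L = \varphi(\|x-y\|)-\varphi(\|x-z\|)$, use the $\alpha\beta$-density hypothesis to force $L \ge 0$, and then conclude via strict monotonicity of $\varphi$ followed by taking the supremum over $z \in E$. The only difference is organizational: the paper argues directly by picking indices from the density-one set $\{k : a_k \ge \varepsilon\}$ and passing to the limit along them, whereas you run the contrapositive (a cofinite set has full $\alpha\beta$-density by condition P3), which is, if anything, a tidier rendering of the paper's informal ``taking $k \to \infty$'' step.
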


\begin{proof}
Since, $\{\varphi(\left\|x_n-y\right\|)-\varphi(\left\|x_n-z\right\|)\}_{n \in \mathbb{N}}$ is $\alpha\beta$-statistically divergent to $\infty$ so, we have,
$$\displaystyle{\lim_{n \rightarrow \infty}}\frac{1}{(\beta_n-\alpha_n+1)}|\{k\in [\alpha_n,\beta_n]: \varphi(\left\|x_k-y\right\|)-\varphi(\left\|x_k-z\right\|)< \varepsilon\}|=0.$$

Now, the set $\{k\in [\alpha_n,\beta_n]: \varphi(\left\|x_k-y\right\|)-\varphi(\left\|x_k-z\right\|)\geq \varepsilon\}$ has $\alpha\beta$-density 1.

So, there exists $k \in\left[\alpha_n, \beta_n\right]$ such that,
$$
\begin{aligned}
& \varphi(\left\|x_k-y\right\|)-\varphi(\left\|x_k-z\right\|)\geq \varepsilon; \\
\Rightarrow & \varphi(\left\|x_k-y\right\|)\geq \varphi(\left\|x_k-z\right\|)+\varepsilon; \\
\Rightarrow & \varphi(\left\|x_k-y\right\|)>\varphi(\left\|x_k-z\right\|).
\end{aligned}
$$
Taking $k \rightarrow \infty$ we get,
$$
\varphi(\|x-y\|)\geq \varphi(\|x-z\|).
$$

As, $\varphi$ is strictly increasing, so,
$$
\|x-y\|\geq \|x-z\|.
$$
So, $E$ is remotal in $X.$
\end{proof}

\begin{theorem}
Let $E$ be a nonempty bounded subset of a Banach space $X.$ If there exists $\varphi \in M$, such that, for all $x \in X$,there exists a sequence $\left\{x_n\right\}_{n \in \mathbb{N}} \subset X$ which converges to $x$, and $y \in E$, such that for all $z \in E -\{y\},$ $\frac{\varphi(\left\|x_n-x\right\|)}{\varphi(\left\|x_n-y\right\|)-\varphi(\left\|x_n-z\right\|)}$ converges $\alpha \beta$-statistically to 0,
then $E$ is remotal.
\end{theorem}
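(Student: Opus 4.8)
The plan is to mirror the argument of the preceding theorem, but to extract the farthest-point inequality from the quotient condition rather than from an outright statistical divergence. Fix $x \in X$ and let $\{x_n\}_{n\in\mathbb{N}}$ and $y \in E$ be the sequence and point supplied by the hypothesis. I will show that $y$ realizes the farthest distance, i.e. $\|x - y\| \geq \|x - z\|$ for every $z \in E$; the case $z = y$ is trivial, so it suffices to treat $z \in E - \{y\}$. Since $\varphi$ is strictly increasing, this is equivalent to proving $\varphi(\|x - y\|) \geq \varphi(\|x - z\|)$, a reformulation that is convenient because it is exactly the quantity appearing in the denominator.

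First I would record the limiting behaviour forced by $x_n \to x$: by continuity of the norm and of $\varphi$, the numerator $\varphi(\|x_n - x\|) \to \varphi(0) = 0$, while for each fixed $z$ the denominator $\varphi(\|x_n - y\|) - \varphi(\|x_n - z\|)$ converges, in the ordinary sense, to $D := \varphi(\|x - y\|) - \varphi(\|x - z\|)$. The goal is precisely to show $D \geq 0$.

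Next I would exploit the $\alpha\beta$-statistical convergence to $0$ of the quotient. For each $\varepsilon > 0$ the set of indices $k$ on which $\varphi(\|x_k - x\|) \geq \varepsilon\,|\varphi(\|x_k - y\|) - \varphi(\|x_k - z\|)|$ has $\alpha\beta$-density $0$, so its complement has $\alpha\beta$-density $1$. As in the previous proof, this lets me select an increasing sequence of indices $k_j \to \infty$ lying in this density-one set, along which the denominator dominates the nonnegative numerator. I would then read off $\varphi(\|x_{k_j} - y\|) \geq \varphi(\|x_{k_j} - z\|)$ along such indices and let $j \to \infty$, using $x_{k_j} \to x$ and the continuity of $\varphi$ and of the norm to obtain $\varphi(\|x - y\|) \geq \varphi(\|x - z\|)$, hence $\|x - y\| \geq \|x - z\|$. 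Since $x$ and $z$ were arbitrary, this would give $y \in F(x, E)$ for every $x \in X$, so that $E$ is remotal.

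The step I expect to be the main obstacle is exactly the passage in the previous paragraph from \emph{the denominator dominates the numerator in absolute value} to \emph{the denominator is nonnegative}. The difficulty is structural: because $x_n \to x$ in the ordinary sense, the numerator $\varphi(\|x_n - x\|)$ tends to $0$ on its own, so the quotient is automatically small whenever $D \neq 0$, regardless of the sign of $D$. Thus the statistical-to-zero hypothesis controls only the magnitude $|D|$ and not its sign, and the density-one estimate delivers $|\varphi(\|x_k - y\|) - \varphi(\|x_k - z\|)| > \varepsilon^{-1}\varphi(\|x_k - x\|) \geq 0$ rather than a one-sided bound. To close the gap I would need to argue that the quotient is in fact nonnegative along a density-one set of indices, so that the absolute values may be removed; ruling out the degenerate regime in which the denominator tends to $0$ or changes sign is the delicate point on which the whole argument turns, and it is here that one must bring in more than the bare vanishing of the numerator.
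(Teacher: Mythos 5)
Your write-up is, by your own account, not a complete proof: everything reduces to the step you flag in your final paragraph, and that step is a genuine --- and in fact unfillable --- gap. But your diagnosis deserves credit, because the obstacle you isolate is exactly where the paper's own proof fails. The paper passes from $\frac{\varphi(\|x_k-x\|)}{\varphi(\|x_k-y\|)-\varphi(\|x_k-z\|)}<\varepsilon$ to $\varphi(\|x_k-y\|)>\varphi(\|x_k-z\|)+\varepsilon_0\,\varphi(\|x_k-x\|)$, which is multiplication by the denominator under the tacit assumption that it is positive (the constant $\varepsilon_0$ is never even defined), and the other case ``$>-\varepsilon$'' is waved away as ``similar.'' Since, as you observe, the hypothesis controls only the magnitude of $D=\varphi(\|x-y\|)-\varphi(\|x-z\|)$ and never its sign, the paper's proof conceals your gap rather than closing it.

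Moreover, no argument can close it, because the pointwise implication that both you and the paper aim at --- ``the hypothesis at $x$ implies $y\in F(x,E)$'' --- is false. Take $X=\mathbb{R}$, $E=(0,1)$, $\varphi(t)=t$, $x=0$, $y=1/2$, and $x_n=-1/n$. For every $z\in E-\{y\}$ the quotient equals $\frac{1/n}{1/2-z}$, which converges to $0$ in the ordinary sense and hence $\alpha\beta$-statistically for every admissible pair $(\alpha,\beta)$; yet $F(0,E)=\emptyset$, because $\delta(0,E)=1$ is not attained on $(0,1)$. The full theorem fails as well: for $x\in E$ take $y=x$, for $x\notin E$ take $y=1/2$, and take the constant sequence $x_n=x$ (which the statement does not exclude); then every quotient is identically $0$, so the hypothesis holds at every $x\in\mathbb{R}$, while $E=(0,1)$ is not remotal. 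So your closing remark is exactly right: the bare vanishing of the numerator cannot yield the needed sign information, and the statement would require a genuinely stronger hypothesis --- for instance a one-sided condition forcing the denominator to be positive on a density-one set of indices, in the spirit of the divergence-to-$+\infty$ assumption of the preceding theorem.
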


\begin{proof}

Since, $\frac{\varphi(\left\|x_n-x\right\|)}{\varphi(\left\|x_n-y\right\|)-\varphi(\left\|x_n-z\right\|)}$ is $\alpha\beta$-statistically convergent to 0, so, 
$$\displaystyle{\lim_{n \rightarrow \infty}}\frac{1}{(\beta_n-\alpha_n+1)}|\{k\in [\alpha_n,\beta_n]: |\frac{\varphi(\left\|x_n-x\right\|)}{\varphi(\left\|x_n-y\right\|)-\varphi(\left\|x_n-z\right\|)}|\geq \varepsilon\}|=0.$$ As the $\alpha\beta$-density of the set $\{k\in \mathbb{N}: |\frac{\varphi(\left\|x_k-x\right\|)}{\varphi(\left\|x_k-y\right\|)-\varphi(\left\|x_k-z\right\|)}|< \varepsilon\}|$ is 1, so, we must have some large enough $k \in\left[\alpha_n, \beta_n\right]$ such that,
$$
\begin{aligned}
& \quad\left|\frac{\varphi(\left\|x_k-x\right\|)}{\varphi(\left\|x_k-y\right\|)-\varphi(\left\|x_k-z\right\|)}\right| < \varepsilon. \\
& \Rightarrow \frac{\varphi(\left\|x_k-x\right\|)}{\varphi(\left\|x_k-y\right\|)-\varphi(\left\|x_k-z\right\|)}<\varepsilon \hspace{0.5cm} \textnormal { or } \hspace{0.5cm} \frac{\varphi(\left\|x_k-x\right\|)}{\varphi(\left\|x_k-y\right\|)-\varphi(\left\|x_k-z\right\|)}>-\varepsilon. \\
\end{aligned}
$$
$$
\begin{aligned}
\textnormal { If } \frac{\varphi(\left\|x_k-x\right\|)}{\varphi(\left\|x_k-y\right\|)-\varphi(\left\|x_k-z\right\|)}<\varepsilon \Rightarrow \varphi(\left\|x_k-y\right\|)>\varphi(\left\|x_k-z\right\|)+\varepsilon_0 \varphi(\left\|x_k-x\right\|).
\end{aligned}
$$


Now, taking limit $k \rightarrow \infty$ as $\beta_n \rightarrow \infty$, we get,
$$
\varphi(\|x-y\|)\ge\varphi(\|x-z\|).
$$

Also, since $\varphi$ is strictly increasing,
$$
\|x-y\|\ge\|x-z\|.
$$

The case for $\frac{\varphi(\left\|x_k-x\right\|)}{\varphi(\left\|x_k-y\right\|)-\varphi(\left\|x_k-z\right\|)}>-\varepsilon$ is similar to previous case, so, we omitted.



In this case also, we get
$$
\|x-y\| \ge\|x-z\|.
$$
So, $E$ is remotal in $X.$
\end{proof}

We introduce the following definitions which will be needed in our next results.

\begin{definition}\label{x2}
Let $X$ be a real normed linear space and $M$ be a non-empty subset of $X$. A sequence $\{x_n\}_{n\in \mathbb{N}} \subset M$ is said to be $\alpha \beta$-statistically  maximizing if there exists $x \in X$ such that $\{\left\|x_n-x\right\|\}_{n \in \mathbb{N}}$ is  $\alpha \beta$-statistically convergent to $\delta(x, M)=\sup \{\|x-y\|: y \in M\}$ as $n \rightarrow \infty$.
\end{definition}

\begin{remark}
Note that, Definition \ref{x2}, improves the Definition 2.18 in \cite{ssm} as $\alpha\beta$-statistical convergence is more general than the statistical convergence.
\end{remark}


\begin{theorem}
Let $X$ be a real normed linear space and $M$ be a non-empty bounded subset of $X$. If $\left\{x_n\right\}_{n \in \mathbb{N}}$ is maximizing in $M$ then $\left\{x_n\right\}_{n \in \mathbb{N}}$ is $\alpha \beta$-statistically maximizing in $M$ for every $[\alpha, \beta]$ pair.
\end{theorem}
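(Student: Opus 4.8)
The plan is to reduce the statement to the elementary fact that ordinary convergence of a real sequence forces $\alpha\beta$-statistical convergence to the same limit, for every admissible pair $(\alpha,\beta)$. By the (classical) definition of a maximizing sequence, there is a point $x \in X$ for which the real sequence $a_n := \|x_n - x\|$ converges, in the usual sense, to $\delta(x,M) = \sup\{\|x-y\| : y \in M\}$. If I can show that $\{a_n\}_{n\in\mathbb{N}}$ is then $\alpha\beta$-statistically convergent to $\delta(x,M)$, the very same point $x$ witnesses (via Definition \ref{x2}) that $\{x_n\}_{n\in\mathbb{N}}$ is $\alpha\beta$-statistically maximizing, and the proof is complete.

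First I would fix $\varepsilon > 0$ and consider the exceptional set $A_\varepsilon := \{k \in \mathbb{N} : |a_k - \delta(x,M)| \geq \varepsilon\}$. Since $a_n \to \delta(x,M)$, there exists $N \in \mathbb{N}$ with $|a_k - \delta(x,M)| < \varepsilon$ for all $k \geq N$, so $A_\varepsilon \subseteq \{1,2,\ldots,N-1\}$ is finite; in particular $|A_\varepsilon| \leq N-1$, a bound that does not depend on $n$. Intersecting with any window $[\alpha_n,\beta_n]$ can only decrease the cardinality, whence
$$
\left|\left\{k \in [\alpha_n,\beta_n] : |a_k - \delta(x,M)| \geq \varepsilon\right\}\right| \leq |A_\varepsilon| \leq N-1 .
$$

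Next I would divide by the window length $\beta_n - \alpha_n + 1$ and pass to the limit. By condition P3 in Definition \ref{y1} we have $\beta_n - \alpha_n \to \infty$, hence $\beta_n - \alpha_n + 1 \to \infty$, and therefore
$$
\frac{1}{\beta_n-\alpha_n+1}\left|\left\{k \in [\alpha_n,\beta_n] : |a_k - \delta(x,M)| \geq \varepsilon\right\}\right| \leq \frac{N-1}{\beta_n-\alpha_n+1} \longrightarrow 0 .
$$
Thus the $\alpha\beta$-density of the exceptional set is zero for every $\varepsilon > 0$, which is precisely $\alpha\beta$-statistical convergence of $\{a_n\}_{n\in\mathbb{N}}$ to $\delta(x,M)$. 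Since the pair $(\alpha,\beta)$ was arbitrary, $\{x_n\}_{n\in\mathbb{N}}$ is $\alpha\beta$-statistically maximizing for every such pair.

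I do not anticipate a genuine obstacle here: the whole argument is the standard observation that a finite exceptional set has vanishing $\alpha\beta$-density, and the only hypothesis doing real work is P3, which forces the denominators to blow up. The single point requiring care is that the bound $N-1$ is \emph{uniform} in $n$, so that it survives division by the growing window length no matter where the windows $[\alpha_n,\beta_n]$ are located along $\mathbb{N}$.
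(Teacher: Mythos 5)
Your proof is correct and follows essentially the same route as the paper: a maximizing sequence has a finite exceptional set $A_\varepsilon$, and finiteness forces the $\alpha\beta$-density of that set to vanish since the window lengths $\beta_n-\alpha_n+1$ blow up by P3. In fact you spell out the one step the paper leaves implicit — that the uniform bound $|A_\varepsilon|\le N-1$ survives division by the growing window length regardless of where the windows sit in $\mathbb{N}$ — so your write-up is a slightly more complete version of the same argument.
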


\textbf{Proof:}
Let the sequence $\left\{x_n\right\}_{n \in \mathbb{N}}$ be maximizing. So, there exists $x \in X$ such that $\left\|x_n-x\right\| \rightarrow \delta(x, M)$ as $n \rightarrow \infty$, Let $\varepsilon>0$. So the set $A=\{k \in \mathbb{N}$ : $\left.\left|\left\|x_k-x\right\|-\delta(x, M)\right| \geqslant \varepsilon\right\}$ is finite. So, $\left\|x_n-x\right\|$ is $\alpha\beta$-statistically convergent to $\delta(x, M).$ So, $\left\{x_n\right\}_{n \in \mathbb{N}}$ is $\alpha\beta$-statistically maximizing. \qedsymbol

\begin{example}
    
Let $\mathbb{R}$ be the set of all real numbers and $M=[-1,1]$. Now $\delta(0, M)=\sup \{|x|: x \in M\}=1$. Let $\left\{x_n\right\}_{n \in N}$ in $[-1,1]$ be defined as,

$$
x_n=\left\{\begin{array}{cc}
0, & \mbox{if}~n=2^m~\mbox{for some}~m\in \mathbb{N}; \\
1-c^n, & \mbox{if}~  n \neq 2^m~\mbox{for all}~m\in \mathbb{N}
\end{array}\right.
$$
where $0<c<1$. We claim that this sequence is not maximizing but it is $\alpha\beta$-statistically maximizing in $M.$
We choose $\alpha_n=1$ and $\beta_n=n^2$ for all $n\in \mathbb{N}.$
The sequence $\left\{x_n\right\}_{n \in \mathbb{N}}$ is not maximizing as, for any $x \in \mathbb{R}$, the real sequence $\left\{\left|x_n-x\right|\right\}_{n \in \mathbb{N}}$ is not convergent in $\mathbb{R}$. But, $\left\{x_n\right\}_{n \in \mathbb{N} }=\left\{\left|x_n-0\right|\right\}_{n \in \mathbb{N}}$ is $\alpha \beta$-statistically convergent to $\delta(0, M)=1$. Indeed, let $0 \leqslant \varepsilon \leqslant 1$. Now,
$$
\begin{aligned}
& \frac{1}{n^2}\left|\left\{k \in\left[1, n^2\right]:\left|x_k-1\right| \geqslant \varepsilon\right\}\right| \\
&=\frac{1}{n^2}(\left[\log _2 n^2\right]+d) (d~\mbox{is a finite positive integer}) \\
& \leqslant \frac{2 \log _2 n}{n^2}+\frac{d}{n^2}\\
& \Longrightarrow \lim _{n \rightarrow \infty} \frac{1}{n^2}\left|\left\{k \in\left[1, n^2\right]:\left|x_k-1\right| \geqslant \varepsilon\right\}\right|=0.
\end{aligned}
$$
\end{example}
Hence the sequence $\left\{x_n\right\}_{n \in \mathbb{N}}$ is $\alpha \beta$-statistically maximizing in $M$.





In \cite{CBM1}, Sababheh et al. introduced the notion of $x$-compactness with respect to a closed bounded subset $E$ of a Banach space $X$ and $x\in X.$ Firstly we recall the definition from \cite{CBM1} as follows. In upcoming definitions and results $diam$ indicates the diameter of the concerned set.

\begin{definition}
Let $X$ be a Banach space and $E$ be a closed bounded subset of $X.$ Let $x\in X.$ Then $E$ is said to be $x$-compact if $diam(E-B_n(x))\rightarrow 0$ as $n \rightarrow \infty$ where $B_n(x)=\{y \in X : \left\|y-x \right\| \leq \delta(x,E)-\frac{1}{n}\}.$   
\end{definition}

We now introduce a concept called $x$-$\alpha\beta$-compactness in a Banach space $X$ where as usual $(\alpha,\beta)$ is a pair of sequences satisfying the three conditions stated in Definition \ref{y1}. It may happen that there exists a Banach space $X,$ a closed bounded subset $E$ and a point $x\in X$ such that $E$ is not $x$-compact but $E$ is $x$-$\alpha\beta$-compact.

\begin{definition}
Let $X$ be a Banach space and $E$ be a closed bounded subset of $X.$ Let $x\in X.$ Then $E$ is said to be $x$-$\alpha\beta$-compact if there exists a sequence $\{t_n\}_{n \in \mathbb{N}}$ of non-negetive real numbers such that $(diam(E-B_{t_n}(x)))$ is $\alpha\beta$-statistically convergent to $0$ as $n \rightarrow \infty$ where $B_{t_n}(x)=\{y \in X : \left\|y-x \right\| \leq \delta(x,E)-t_n\}$ and $\{t_n\}_{n \in \mathbb{N}}$ is $\alpha\beta$-statistically convergent to $0.$   
\end{definition}

\begin{theorem}\label{z1}
Let $X$ be a Banach space, $E$ be a closed bounded subset of $X$ and $x\in X.$  If $E$ is $x$-compact then $E$ is $x$-$\alpha\beta$-compact.   
\end{theorem}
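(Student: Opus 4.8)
The plan is to reduce the whole statement to one elementary fact: for every admissible pair $(\alpha,\beta)$, ordinary convergence of a real sequence forces its $\alpha\beta$-statistical convergence to the same limit. This is exactly the reasoning already used to prove that a maximizing sequence is $\alpha\beta$-statistically maximizing. The justification is short: if $a_n \to L$ in the usual sense, then for each $\varepsilon>0$ the set $\{k : |a_k-L|\ge \varepsilon\}$ is finite, so its intersection with any window $[\alpha_n,\beta_n]$ has cardinality bounded by a fixed constant, while the window length $\beta_n-\alpha_n+1$ tends to $\infty$ by condition P3; dividing, the $\alpha\beta$-density tends to $0$.

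With this in hand, I would take the witnessing sequence to be $t_n=\tfrac{1}{n}$ for all $n$. This choice is natural because it makes the exceptional balls coincide verbatim with the ones in the definition of $x$-compactness: $B_{t_n}(x)=\{y\in X:\|y-x\|\le \delta(x,E)-\tfrac{1}{n}\}=B_n(x)$, and hence $\text{diam}(E-B_{t_n}(x))=\text{diam}(E-B_n(x))$ for every $n$. The sequence $\{t_n\}$ is also non-negative, as required by the definition.

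Two things then remain to be checked, and each is a direct instance of the reduction above. First, $\{t_n\}=\{\tfrac{1}{n}\}$ converges to $0$ in the usual sense, hence is $\alpha\beta$-statistically convergent to $0$. Second, the hypothesis that $E$ is $x$-compact is precisely the statement that $\text{diam}(E-B_n(x))\to 0$ as $n\to\infty$; applying the reduction to this ordinary-convergent sequence of diameters gives its $\alpha\beta$-statistical convergence to $0$, and since it equals $\text{diam}(E-B_{t_n}(x))$ the required convergence holds. Exhibiting a non-negative sequence $\{t_n\}$ with both of these properties is exactly the content of $x$-$\alpha\beta$-compactness, so $E$ is $x$-$\alpha\beta$-compact.

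I do not anticipate a genuine obstacle: the theorem is essentially the inclusion of one notion inside the other, and the only care needed is to invoke condition P3 (so that the window lengths diverge and every finite exceptional set becomes negligible) and to observe that $t_n=\tfrac1n$ recovers the balls $B_n(x)$ exactly. Thus the entire argument is the single remark that ordinary convergence implies $\alpha\beta$-statistical convergence, applied once to $\{\tfrac1n\}$ and once to the diameters $\{\text{diam}(E-B_n(x))\}_{n\in\mathbb{N}}$.
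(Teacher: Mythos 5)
Your proposal is correct and follows essentially the same route as the paper: take $t_n=\tfrac1n$ so that $B_{t_n}(x)=B_n(x)$, and observe that both $\{t_n\}$ and $\{\mathrm{diam}(E-B_n(x))\}$ converge to $0$ in the ordinary sense and hence $\alpha\beta$-statistically. If anything, your explicit justification of the key reduction (finite exceptional sets plus condition P3 imply vanishing $\alpha\beta$-density) is slightly more careful than the paper's own write-up.
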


\begin{proof}
Since $E$ is $x$-compact, so, $diam(E-B_n(x))\rightarrow 0$ as $n \rightarrow \infty$ where $B_n(x)=\{y \in X : \left\|y-x \right\| \leq \delta(x,E)-\frac{1}{n}\}.$ Now, here the sequence is $t_n=\frac{1}{n}, n\in \mathbb{N}$ and for every $\varepsilon>0,$ $A_{\alpha\beta}^n(\varepsilon)=\{k\in [\alpha_n, \beta_n]: t_n \geq \varepsilon\}$ is finite. So, $\{t_n\}_{n \in \mathbb{N}}$ is $\alpha\beta$-statistically convergent to $0.$ Also, since $diam(E-B_n(x))\rightarrow 0$ as $n \rightarrow \infty$, so, $diam(E-B_n(x))$ is $\alpha\beta$-statistically convergent to $0.$ So, $E$ is $x$-$\alpha\beta$-compact.     
\end{proof}

The converse of Theorem \ref{z1} is not true. This follows from the upcoming example.

\begin{example}
Consider the Banach space $\mathbb{R}$ and $M=[-1,1].$ Let $0\in \mathbb{R}.$  Now, $\delta(0,M)=1$ and, $B_n(0)=\{x \in \mathbb{R}: |x|\leq 1-\frac{1}{n}\}.$ In this case $M$ is not $0$-compact because $diam(M-B_n(0))\geq 2$ for all $n\in \mathbb{N}$ and $diam(M-B_n(0))$ is not convergent to $0.$ Let $\alpha_n=1$ and $\beta_n=n^2$ for all $n\in \mathbb{N}.$ Let us define a sequence $(t_n)$ by 

$$
t_n=\left\{\begin{array}{cc}
0 &~\mbox{if}~n \neq m^2~\mbox{for any}~m\in \mathbb{N}; \\
1 &~\mbox{if}~n = m^2~\mbox{for some}~m\in \mathbb{N}.
\end{array}\right.
$$
So, $(t_n)$ is $\alpha\beta$-statistically convergent to $0.$ Now, it can be seen that 

$$
diam(M-B_{t_n}(0)=\left\{\begin{array}{cc}
0 &~\mbox{if}~n \neq m^2~\mbox{for any}~m\in \mathbb{N}; \\
2 &~\mbox{if}~n = m^2~\mbox{for some}~m\in \mathbb{N}.
\end{array}\right.
$$
So, $(diam(M-B_{t_n}(0))$ is $\alpha\beta$-statistically convergent to $0.$ This shows that $M$ is $0$-$\alpha\beta$-compact.
\end{example}

Now, we discuss some results, related to the $\alpha\beta$-compactness which will help us to find sufficient condition for remotality and uniquely remotality of closed bounded subsets in a Banach space $X.$

\begin{theorem}\label{z2}
Let $X$ be a Banach space and $E$ be a closed bounded subset of $X.$ If $E$ is $x$-$\alpha\beta$-compact for some $x\in X$ then $\delta(x,E)=\sup\{\left\|x-z\right\| : z\in E\}$ is attained.
\end{theorem}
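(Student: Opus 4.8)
The plan is to realise the supremum by extracting from $E$ a Cauchy sequence of near-farthest points and then invoking completeness of $X$ together with the closedness of $E$. Write $\delta=\delta(x,E)$. If $\delta=0$ then every $z\in E$ satisfies $\|x-z\|=0$, so $E\subseteq\{x\}$ and the supremum is trivially attained; hence assume $\delta>0$. By hypothesis $E$ is $x$-$\alpha\beta$-compact, so there is a witness sequence $(t_n)$ of non-negative reals with both $(t_n)$ and $d_n:=diam(E-B_{t_n}(x))$ being $\alpha\beta$-statistically convergent to $0$, where $B_{t_n}(x)=\{y\in X:\|y-x\|\le\delta-t_n\}$ and therefore $E-B_{t_n}(x)=\{e\in E:\|x-e\|>\delta-t_n\}$ is the ``shell'' of near-farthest points.

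First I would fix $\varepsilon>0$ and extract a single index that is good for both sequences at once. Since $(t_n)$ and $(d_n)$ are each $\alpha\beta$-statistically null, for all large $n$ the two index sets $\{k\in[\alpha_n,\beta_n]:t_k\ge\varepsilon\}$ and $\{k\in[\alpha_n,\beta_n]:d_k\ge\varepsilon\}$ occupy a proportion of the window $[\alpha_n,\beta_n]$ tending to $0$; hence their union is a proper subset of the window and there exists $k=k(\varepsilon)\in[\alpha_n,\beta_n]$ with $t_k<\varepsilon$ and $diam(E-B_{t_k}(x))<\varepsilon$. For such an index the shell $S_\varepsilon:=E-B_{t_k}(x)$ is non-empty, because $\delta-t_k<\delta$ and $\delta$ is a supremum, so there is a point of $E$ at distance exceeding $\delta-t_k$; moreover $S_\varepsilon$ has diameter $<\varepsilon$.

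Running $\varepsilon=1/m$ I would pick $e_m\in S_{1/m}$, so that $\delta-1/m<\|x-e_m\|\le\delta$, whence $\|x-e_m\|\to\delta$. The sequence $(e_m)$ is Cauchy: given $m,m'\ge N$, the shell attached to the larger of the two thresholds $t_k$ (equivalently the larger shell, since increasing $t_k$ enlarges $E-B_{t_k}(x)$) contains both $e_m$ and $e_{m'}$ and has diameter less than $1/N$, so $\|e_m-e_{m'}\|<1/N$. By completeness of $X$ the sequence converges to some $e$, and since $E$ is closed we have $e\in E$; continuity of the norm then gives $\|x-e\|=\lim_m\|x-e_m\|=\delta$. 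Thus $\delta(x,E)$ is attained.

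The step needing the most care is the selection of the good index, and this is also where the only genuine obstacle lies. Because the convergence of $(t_n)$ and $(d_n)$ is merely $\alpha\beta$-statistical, I cannot pass to an ordinary subsequence and must argue window by window that a single index is simultaneously small for both sequences. The subtler point is guaranteeing that the extracted index has $t_k>0$: if $t_k=0$ the ball $B_{t_k}(x)$ already contains all of $E$, and the shell $E-B_{t_k}(x)$ degenerates to the empty set, carrying no information about attainment. I would therefore need the witness sequence to keep $E-B_{t_n}(x)$ non-empty (equivalently $t_n>0$) along a set of indices of positive $\alpha\beta$-density, and to check that the density count in the selection step can be run on that set; verifying this compatibility with the moving windows $[\alpha_n,\beta_n]$ is the main thing to pin down.
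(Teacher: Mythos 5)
Your proposal is correct and follows essentially the same route as the paper: both proofs use the statistically small diameters of the shells $E-B_{t_k}(x)$ to manufacture a Cauchy sequence of near-farthest points, then invoke completeness of $X$, closedness of $E$, and continuity of the norm. The only structural difference is cosmetic: the paper fixes one maximizing sequence $(h_n)$ with $\|x-h_n\|\uparrow\delta(x,E)$ at the outset and shows its tail lies in a single shell of diameter $<\varepsilon$, whereas you pick one point $e_m$ from each shell and get the Cauchy property by nesting the shells; your simultaneous-index selection (a union of two $\alpha\beta$-null sets is $\alpha\beta$-null, hence cannot exhaust a window $[\alpha_n,\beta_n]$) is sound.

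Concerning the obstacle you flag, namely ensuring $t_k>0$ so that the shell $E-B_{t_k}(x)$ is non-empty: the paper's own proof has exactly the same unaddressed issue. After choosing $N$ with $diam(E-B_{t_N}(x))<\varepsilon$, the paper picks $M\ge N$ such that $\|x-h_n\|>\delta(x,E)-t_N$ for all $n\ge M$, which is impossible when $t_N=0$, since always $\|x-h_n\|\le\delta(x,E)$ and, in the nontrivial case, no $h_n$ attains the supremum. Indeed, if one allows $t_n\equiv 0$ and adopts the convention that the empty set has diameter $0$, then every closed bounded set is $x$-$\alpha\beta$-compact and the theorem fails for any non-remotal example; so the definition must be read as requiring $t_n>0$, in which case every shell is automatically non-empty because $\delta(x,E)-t_n<\delta(x,E)$ and $\delta(x,E)$ is a supremum. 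Under that reading your argument closes completely: your worry about needing a positive-density set of indices with $t_n>0$ dissolves, since every selected index then works, and your write-up is in fact more careful on this point than the paper's.
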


\begin{proof}
Since $E$ is $x$-$\alpha\beta$-compact, so, there exists a sequence $(t_n)$ of non-negetive real numbers such that $(diam(E-B_{t_n}(x)))$ is $\alpha\beta$-statistically convergent to $0$ as $n \rightarrow \infty$ where $B_{t_n}(x)=\{y \in X : \left\|y-x \right\| \leq \delta(x,E)-t_n\}$ and $(t_n)$ is $\alpha\beta$-statistically convergent to $0.$ We can choose a sequence $(h_n)\subset E$ such that $\left\|x-h_n\right\|\uparrow \delta(x,E).$ Let $\varepsilon>0.$ Now, we have,
$$\displaystyle{\lim_{n \rightarrow \infty}}\frac{1}{(\beta_n-\alpha_n+1)}|\{k\in [\alpha_n,\beta_n]: diam(E-B_{t_n}(x))\geq \varepsilon\}|=0.$$ This means, the set $A_{\alpha\beta}^n(\varepsilon)=\{k: diam(E-B_{t_n}(x)) \geq \varepsilon\}$ has $\alpha\beta$-density zero. So, the set $\mathbb{N}-A_{\alpha\beta}^n(\varepsilon)$ has $\alpha\beta$-density 1. Let $N\in \mathbb{N}-A_{\alpha\beta}^n(\varepsilon).$ Choose a  natural number $M\geq N$ such that $\left\|x-h_n \right\|>\delta(x,E)-t_N~\mbox{for}~n\geq M.$ Now, for $i,j\geq M,~\mbox{we have}~\left\|h_i-h_j \right\|<\varepsilon.$ Let $h_n \rightarrow h\in E$ as $n\rightarrow \infty.$ This shows that $\left\|x-h \right\|=\delta(x,E).$ So, $\delta(x,E)$ is attained. 
\end{proof}

\begin{corollary}
Let $X$ be a Banach space and $E$ be a closed bounded subset of $X.$ If $E$ is $x$-$\alpha\beta$-compact for every $x\in X$ then $E$ is remotal.
\end{corollary}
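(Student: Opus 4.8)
The plan is to reduce the statement directly to Theorem \ref{z2}, which already does all the analytic work for a single point; the corollary is then obtained simply by quantifying that conclusion over all of $X$. First I would recall the definition: $E$ is remotal exactly when $r(E)=X$, that is, when $F(x,E)\neq\phi$ for every $x\in X$, or equivalently when the farthest distance $\delta(x,E)=\sup\{\|x-z\|:z\in E\}$ is attained at some point of $E$ for each $x\in X$. Thus it suffices to show that $\delta(x,E)$ is attained for every $x$.

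Next I would fix an arbitrary $x\in X$. By the hypothesis of the corollary, $E$ is $x$-$\alpha\beta$-compact for this particular $x$ (in fact for every point of $X$), so the single-point hypothesis of Theorem \ref{z2} is satisfied. Applying Theorem \ref{z2} verbatim then yields that $\delta(x,E)$ is attained, i.e.\ there exists $e\in E$ with $\|x-e\|=\delta(x,E)$. Hence $e\in F(x,E)$ and in particular $F(x,E)\neq\phi$. Since $x\in X$ was arbitrary, $F(x,E)\neq\phi$ for every $x\in X$, so $r(E)=X$, which is precisely the assertion that $E$ is remotal.

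I do not expect any genuine obstacle here: the corollary is an immediate consequence of Theorem \ref{z2}. All the real content --- choosing a sequence $(h_n)\subset E$ with $\|x-h_n\|\uparrow\delta(x,E)$ and using the $\alpha\beta$-statistical vanishing of $diam(E-B_{t_n}(x))$ to force $(h_n)$ to be Cauchy and hence convergent in the closed set $E$ --- has already been carried out in the proof of Theorem \ref{z2}. The only point worth stating explicitly is that the ``for some $x$'' hypothesis of that theorem is here strengthened to ``for every $x\in X$,'' and that this universally quantified attainment of $\delta(x,E)$ is exactly the definition of remotality.
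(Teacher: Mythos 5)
Your proof is correct and is exactly the intended argument: the paper states this corollary without proof precisely because it follows by applying Theorem \ref{z2} pointwise to each $x\in X$, which is what you do. No gaps and no difference in approach.
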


\begin{theorem}
Let $X$ be a Banach space and $E$ be a closed bounded subset of $X.$ If $E$ is $x$-$\alpha\beta$-compact for some $x\in X$ then $x$ is a max-Chebyshev point.
\end{theorem}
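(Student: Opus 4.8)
The plan is to read ``$x$ is a max-Chebyshev point'' as the farthest-point analogue of the Chebyshev property: $x$ admits a \emph{unique} farthest point in $E$, i.e.\ $F(x,E)$ is a singleton. Existence is already secured by Theorem \ref{z2}, which gives $F(x,E)\neq\emptyset$ under $x$-$\alpha\beta$-compactness, so the entire content lies in uniqueness. First I would fix the defining sequence $(t_n)$ from the hypothesis, so that both $(t_n)$ and $(\operatorname{diam}(E-B_{t_n}(x)))$ are $\alpha\beta$-statistically convergent to $0$, where $B_{t_n}(x)=\{y:\|y-x\|\le\delta(x,E)-t_n\}$.

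To prove uniqueness I would argue by contradiction. Suppose $p,q\in F(x,E)$ with $p\neq q$, and put $d=\|p-q\|>0$; by definition $\|x-p\|=\|x-q\|=\delta(x,E)$. Fix $\varepsilon$ with $0<\varepsilon<d$. Because the diameter sequence is $\alpha\beta$-statistically null, the index set $\{k\in[\alpha_n,\beta_n]:\operatorname{diam}(E-B_{t_k}(x))\ge\varepsilon\}$ has $\alpha\beta$-density $0$, and hence the complementary ``small-diameter'' set of indices has $\alpha\beta$-density $1$. The goal is to locate, inside some window $[\alpha_n,\beta_n]$, an index $k$ for which $\operatorname{diam}(E-B_{t_k}(x))<\varepsilon$ and yet $p,q$ both lie in $E-B_{t_k}(x)$, which would force $d=\|p-q\|\le\operatorname{diam}(E-B_{t_k}(x))<\varepsilon<d$, an absurdity giving $p=q$.

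The crucial observation driving this is that whenever $t_k>0$ one has $\delta(x,E)>\delta(x,E)-t_k$, so $p,q\notin B_{t_k}(x)$, whence $p,q\in E-B_{t_k}(x)$ and therefore $\operatorname{diam}(E-B_{t_k}(x))\ge\|p-q\|=d$. In other words, every index with $t_k>0$ automatically lands in the large-diameter set. So if $(t_n)$ is nondegenerate in the sense that $t_k>0$ on an $\alpha\beta$-density-$1$ set of indices, then $\operatorname{diam}(E-B_{t_k}(x))\ge d$ on a density-$1$ set, which flatly contradicts the $\alpha\beta$-statistical nullity of the diameters; this yields $d=0$, i.e.\ $p=q$, and $F(x,E)$ is a singleton.

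The hard part will be exactly this nondegeneracy step. The difficulty is that $E-B_{t_k}(x)=\emptyset$ precisely when $t_k=0$ (since then $B_{t_k}(x)\supseteq E$), in which case $\operatorname{diam}(E-B_{t_k}(x))=0$ vacuously and imposes no constraint on the farthest points. Thus the two density-$1$ sets---``small diameter'' and $\{k:t_k>0\}$---could in principle fail to intersect, and the argument stalls on any window where the small-diameter indices all have $t_k=0$. To close the gap I would record that the defining sequence may (and must) be taken with $t_k>0$ on an $\alpha\beta$-density-$1$ set of indices, which is the genuinely load-bearing hypothesis here; under it the contradiction above is immediate and establishes that $x$ is a max-Chebyshev point.
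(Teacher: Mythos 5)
Your core argument is the paper's argument: existence of a farthest point from Theorem \ref{z2}, then uniqueness by noting that two distinct farthest points $p,q$ both lie in $E-B_{t_k}(x)$ whenever $t_k>0$, so $\|p-q\|\le \operatorname{diam}(E-B_{t_k}(x))$, which is incompatible with the diameters being $\alpha\beta$-statistically null. The paper's proof is exactly this, except that it writes $\delta(x,E)>\delta(x,E)-t_n$ ``for all $n$,'' i.e.\ it tacitly assumes $t_n>0$ for every $n$, although the definition of $x$-$\alpha\beta$-compactness only requires $t_n\ge 0$. So the degeneracy issue you isolate is not a pedantic worry; it is a genuine gap, and the theorem is in fact false as literally stated: the paper's own example following Theorem \ref{z1} (take $M=[-1,1]\subset\mathbb{R}$, $x=0$, $t_n=1$ when $n$ is a perfect square and $t_n=0$ otherwise) exhibits a set that is $0$-$\alpha\beta$-compact by the paper's definition, yet $F(0,M)=\{-1,1\}$, so $0$ is not a max-Chebyshev point. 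In that example, $\operatorname{diam}(M-B_{t_k}(0))=2$ for every index with $t_k>0$, so statistical nullity of the diameters forces $t_k=0$ off a set of $\alpha\beta$-density zero --- precisely the degenerate scenario you describe.

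The one defect in your write-up is the parenthetical ``may'': you cannot in general upgrade a witnessing sequence to one with $t_k>0$ on an $\alpha\beta$-density-$1$ set, as the same example shows (there, \emph{every} witnessing sequence is degenerate). Nondegeneracy must be \emph{imposed}, either as an added hypothesis in the theorem or by amending the definition of $x$-$\alpha\beta$-compactness to require $t_n>0$ for all $n$; under that assumption your argument is complete --- indeed the density-$1$ intersection step can be shortened to the observation that $\{k: t_k>0\}\subseteq\{k:\operatorname{diam}(E-B_{t_k}(x))\ge d\}$, which contradicts nullity once $d>0$. Be aware also that the existence half you delegate to Theorem \ref{z2} inherits the same caveat: its proof chooses $h_n$ with $\|x-h_n\|>\delta(x,E)-t_N$, which is impossible when $t_N=0$, so the tacit assumption $t_n>0$ is load-bearing there as well.
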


\begin{proof}
By Theorem \ref{z2}, we can say $\delta(x,E)$ is attained by some element in $E.$ If possible, let there exist $h, h'\in E$ such that $\left\|x-h \right\|=\left\|x-h' \right\|=\delta(x,E).$ Since $E$ is $x$-$\alpha\beta$-compact, so, there exists a sequence $(t_n)$ of non-negetive real numbers such that $(diam(E-B_{t_n}(x)))$ is $\alpha\beta$-statistically convergent to $0$ as $n \rightarrow \infty$ where $B_{t_n}(x)=\{y \in X : \left\|y-x \right\| \leq \delta(x,E)-t_n\}$ and $(t_n)$ is $\alpha\beta$-statistically convergent to $0.$ So, $\left\|x-h \right\|=\left\|x-h' \right\|=\delta(x,E)>\delta(x,E)-t_n~\mbox{for all}~n\in \mathbb{N}.$ So, $h,h' \in E-B_{t_n}(x)~\mbox{for all}~n\in \mathbb{N}.$ Also, $\left\|h-h' \right\|\leq diam(E-B_{t_n}(x)).$ Since, $(diam(E-B_{t_n}(x)))$ is $\alpha\beta$-statistically convergent to $0,$ so, $\left\|h-h' \right\|$ is $\alpha\beta$-statistically convergent to $0.$ So, $\left\|h-h' \right\|=0$ and $h=h'.$  
\end{proof}

\begin{corollary}
Let $X$ be a Banach space and $E$ be a closed bounded subset of $X.$ If $E$ is $x$-$\alpha\beta$-compact for every $x\in X$ then $E$ is uniquely remotal.
\end{corollary}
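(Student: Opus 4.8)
The plan is to obtain this as an immediate consequence of the two preceding theorems, since the hypothesis here, namely $x$-$\alpha\beta$-compactness for \emph{every} $x \in X$, is just the universal quantification of the hypothesis used in those theorems. Recall that by definition $E$ is uniquely remotal precisely when $r(E)=X$ (equivalently, $F(x,E)\neq \phi$ for every $x\in X$) and, in addition, $F(x,E)$ is a singleton for each $x\in X$. So it suffices to verify these two conditions separately for an arbitrary but fixed $x\in X$.

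First I would fix $x\in X$ and invoke the hypothesis that $E$ is $x$-$\alpha\beta$-compact. By Theorem \ref{z2}, the farthest distance $\delta(x,E)$ is attained, so $F(x,E)\neq \phi$. Since $x\in X$ was arbitrary, this already yields $r(E)=X$; that is, $E$ is remotal.

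Next, fixing $x$ again and using $x$-$\alpha\beta$-compactness, I would apply the theorem immediately preceding this corollary. Its proof shows that whenever $h,h'\in E$ both realize $\delta(x,E)$, the estimate $\left\|h-h'\right\|\le diam(E-B_{t_n}(x))$ together with the $\alpha\beta$-statistical convergence of $diam(E-B_{t_n}(x))$ to $0$ forces $\left\|h-h'\right\|=0$, i.e. $h=h'$. Hence $F(x,E)$ contains at most one point, and combined with the previous step it is exactly a singleton.

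Putting the two pieces together, $F(x,E)$ is a nonempty singleton for every $x\in X$, which is precisely the assertion that $E$ is uniquely remotal. I expect no substantial obstacle here, as the entire content lies in the two earlier theorems; the only point requiring care is to confirm that both the existence half (Theorem \ref{z2}) and the uniqueness half (the preceding theorem) are genuinely available for \emph{every} $x$, which is guaranteed by the stronger hypothesis assumed in this corollary.
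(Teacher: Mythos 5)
Your proposal is correct and follows exactly the route the paper intends: the corollary is an immediate pointwise application of Theorem \ref{z2} (existence of a farthest point) and the theorem immediately preceding the corollary (uniqueness, i.e.\ each $x$ is a max-Chebyshev point) to every $x \in X$. The only cosmetic simplification would be to cite the statement of the preceding theorem directly rather than re-examining its proof, but the content is the same.
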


Now, in the last part we will introduce the notion of partial $x$-$\alpha\beta$-compactness and give one result related to this.

\begin{definition}
Let $X$ be a Banach space, $E$ be a closed bounded subset of $X$ and $x\in X.$ Then $E$ is said to be partial $x$-$\alpha\beta$-compact if there exists closed $H\subset E$ such that $\delta(x,E)=\delta(x,H)$ and $H$ is $x$-$\alpha\beta$-compact.  
\end{definition}

\begin{theorem}
Let $X$ be a Banach space and $E$ be a closed bounded subset of $X.$ Then $E$ is partial $x$-$\alpha\beta$-compact if and only if $\delta(x,E)$ is attained.     
\end{theorem}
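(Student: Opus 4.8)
The plan is to establish the two implications separately, leaning on Theorem \ref{z2} for the forward direction and on a singleton construction for the converse.

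For the forward implication, I would assume $E$ is partial $x$-$\alpha\beta$-compact and unpack the definition: there is a closed set $H \subset E$ with $\delta(x,H) = \delta(x,E)$ such that $H$ is $x$-$\alpha\beta$-compact. Since the hypothesis of Theorem \ref{z2} is exactly that $H$ is $x$-$\alpha\beta$-compact, applying that theorem to $H$ yields a point $h \in H$ with $\|x - h\| = \delta(x,H)$. Because $H \subset E$ and $\delta(x,H) = \delta(x,E)$, this same $h$ lies in $E$ and satisfies $\|x-h\| = \delta(x,E)$, so $\delta(x,E)$ is attained in $E$.

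For the converse, I would assume $\delta(x,E)$ is attained, say by $e_0 \in E$ with $\|x - e_0\| = \delta(x,E)$, and then exhibit a witness for partial compactness by taking $H = \{e_0\}$. This $H$ is closed, contained in $E$, and satisfies $\delta(x,H) = \|x - e_0\| = \delta(x,E)$, so only the $x$-$\alpha\beta$-compactness of $H$ remains to be checked. Here the key observation is that for a singleton the set $H - B_{t_n}(x)$ is either $\{e_0\}$ or empty, so $diam(H - B_{t_n}(x)) = 0$ for every choice of $t_n \ge 0$; choosing any sequence $\{t_n\}_{n\in\mathbb{N}}$ of non-negative reals that is $\alpha\beta$-statistically convergent to $0$ (for instance $t_n \equiv 0$) then makes $diam(H - B_{t_n}(x))$ the constant sequence $0$, which is trivially $\alpha\beta$-statistically convergent to $0$. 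Hence $H$ is $x$-$\alpha\beta$-compact and $E$ is partial $x$-$\alpha\beta$-compact.

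The argument is short, and I do not expect a genuinely hard step; the only points requiring care are bookkeeping ones. First, when applying the definition of $x$-$\alpha\beta$-compactness to $H$, the ball is $B_{t_n}(x) = \{y \in X : \|y-x\| \le \delta(x,H) - t_n\}$ with $\delta(x,H)$ in place of $\delta(x,E)$, so one must note that these radii coincide by the construction $\delta(x,H)=\delta(x,E)$. Second, one must invoke the convention that the empty set and singletons have diameter $0$, so that the sequence $diam(H - B_{t_n}(x))$ is genuinely identically zero. With these conventions in place both directions close immediately.
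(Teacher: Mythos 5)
Your proof is correct and follows the same route as the paper: the forward direction applies Theorem \ref{z2} to the witness $H$, and the converse takes $H$ to be the singleton $\{e_0\}$. Your verification that the singleton is indeed $x$-$\alpha\beta$-compact (via the diameter-zero observation) is a detail the paper simply asserts, so your write-up is, if anything, slightly more complete.
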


\begin{proof}
First of all let $E$ be partial $x$-$\alpha\beta$-compact. So, there exists $H\subset E$ such that $\delta(x,E)=\delta(x,H)$ and $H$ is $x$-$\alpha\beta$-compact. So, by Theorem \ref{z2}, there exists $e\in H$ such that $\delta(x,H)=\left\|x-e \right\|.$ This shows that $\delta(x,E)$ is attained. Now, let $E$ be such that $\delta(x,E)$ is attained. So, there exists $e_1\in E$ such that $\delta(x,E)=\left\|x-e_1 \right\|.$ We can take $H=\{e_1\}$ and $H$ is $x$-$\alpha\beta$-compact.            
\end{proof}

\section{Conflict of Interest}
The authors of the article declares that they have no conflict of interest.

\end{document}